\newcommand{\sumprime}{\if@display\sideset{}{'}\sum%
            \else\sum'\fi}
\DeclareRobustCommand{\intprod}{%
  \mathbin{\mathpalette\int@prod{(0.1,0)(0.9,0)(0.9,0.8)}}%
}
\DeclareRobustCommand{\intprodr}{%
  \mathbin{\mathpalette\int@prod{(0.1,0.8)(0.1,0)(0.9,0)}}}
\newcommand{\int@prod}[2]{%
  \begingroup
  \sbox\z@{$\m@th#1+$}%
  \setlength\unitlength{\wd\z@}%
  \begin{picture}(1,1)
  \roundcap
  \polyline#2
  \end{picture}%
 \endgroup
}
\begin{document}

\numberwithin{equation}{section}

\newtheorem{theorem}{Theorem}[section]
\newtheorem{proposition}[theorem]{Proposition}
\newtheorem{conjecture}[theorem]{Conjecture}
\def\theconjecture{\unskip}
\newtheorem{corollary}[theorem]{Corollary}
\newtheorem{lemma}[theorem]{Lemma}
\newtheorem{observation}[theorem]{Observation}
\newtheorem{definition}{Definition}
\numberwithin{definition}{section} 
\newtheorem{remark}{Remark}
\def\theremark{\unskip}
\newtheorem{kl}{Key Lemma}
\def\thekl{\unskip}
\newtheorem{question}{Question}
\def\thequestion{\unskip}
\newtheorem{example}{Example}
\def\theexample{\unskip}
\newtheorem{problem}{Problem}

\thanks{Supported by National Natural Science Foundation of China, No. 12271101.}

\address{School of Mathematical Sciences, Key Laboratory of Intelligent Computing and Applications (Ministry of Education), Tongji University, Shanghai 200092, China}

\email{ypxiong@tongji.edu.cn}

\title{Convexity of the Bergman Kernels on Convex Domains}

\author{Yuanpu Xiong}
\date{}

\begin{abstract}
Let $\Omega$ be a convex domain in $\mathbb{C}^n$ and $\varphi$ a convex function on $\Omega$. We prove that $\log{K_{\Omega,\varphi}(z)}$ is a convex function (might be identically $-\infty$) on $\Omega$, where $K_{\Omega,\varphi}$ is the weighted Bergman kernel. When $\varphi\equiv0$, we prove a Brunn-Minkowski type inequality, which further implies that $K_\Omega(z)^{-\frac{1}{2n}}$ is a convex function if $\Omega$ is convex. Some necessary and sufficient conditions for strict convexity are also obtained.

\bigskip
\noindent{{\sc Mathematics Subject Classification} (2020): 32A25, 26B25}

\smallskip
\noindent{{\sc Keywords}: Bergman kernel, convex functions, Brunn-Minkowski inequality}
\end{abstract}

\maketitle

\section{Introduction}

Let $\Omega$ be a domain in $\mathbb{C}^n$ and $\varphi$ a plurisubharmonic (psh) function on $\Omega$. Denote by $A^2(\Omega,\varphi)$ the weighted Bergman space of $L^2$ integrable holomorphic functions with respect to the weight $e^{-\varphi}$ on $\Omega$, i.e.,
\[
A^2(\Omega,\varphi):=\left\{f\in\mathcal{O}(\Omega):\|f\|_{\Omega,\varphi}^2=\int_\Omega|f|^2e^{-\varphi}<+\infty\right\}.
\]
The Bergman kernel $K_{\Omega,\varphi}(\zeta,z)$ of $A^2(\Omega,\varphi)$ is a function on $\Omega\times\Omega$, which is holomorphic with respect to $(\zeta,\bar{z})$. The function $K_{\Omega,\varphi}(z):=K_{\Omega,\varphi}(z,z)$ is a real-analytic function on $\Omega$, which is also called the (diagonal) Bergman kernel function. An equivalent characterization of $K_{\Omega,\varphi}(z)$ is the following extremal property:
\begin{equation}\label{eq:extremal}
K_{\Omega,\varphi}(z)=\sup\left\{|f(z)|^2:f\in{A^2(\Omega,\varphi)},\ \|f\|_{\Omega,\varphi}\leq1\right\}.
\end{equation}
In particular, it follows that $\log{K_{\Omega,\varphi}}(z)$ is always a psh function (it can be identically $-\infty$).

The goal of this note is to show the following

\begin{theorem}\label{th:convex}
If\/ $\Omega\subset\mathbb{C}^n$ is a convex domain, $\varphi$ is a convex function on $\Omega$, then $\log{K_{\Omega,\varphi}(z)}$ is a convex function.
\end{theorem}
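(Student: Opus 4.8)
The plan is to deduce the theorem from the plurisubharmonic variation of weighted Bergman kernels (Berndtsson's theorem), using convexity to manufacture a pseudoconvex total space over a one-dimensional complex parameter. Since convexity of a function on $\Omega\subset\mathbb{C}^n\cong\mathbb{R}^{2n}$ is equivalent to convexity of its restriction to every real line segment, I fix $a\in\Omega$ and a direction $v\in\mathbb{C}^n$ (a real direction of $\mathbb{R}^{2n}$) and aim to prove that $g(t):=\log K_{\Omega,\varphi}(a+tv)$ is convex on $I=\{t\in\mathbb{R}:a+tv\in\Omega\}$. I would encode this as a subharmonicity statement: letting $\lambda=t+is$ range over the tube $T_I=I+i\mathbb{R}\subset\mathbb{C}$, the function $G(\lambda):=g(\operatorname{Re}\lambda)$ is subharmonic on $T_I$ if and only if $g$ is convex on $I$, since its distributional Laplacian is $g''(t)$. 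So it suffices to realize $G$ as a fiberwise $\log$-Bergman kernel and read off subharmonicity.

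The key construction is the total space
\[
\mathcal{U}:=\{(\lambda,w)\in\mathbb{C}\times\mathbb{C}^n:\ w+a+\operatorname{Re}(\lambda)\,v\in\Omega\},
\]
together with the weight $\Psi(\lambda,w):=\varphi\bigl(w+a+\operatorname{Re}(\lambda)\,v\bigr)$. The point is that $(\lambda,w)\mapsto w+a+\operatorname{Re}(\lambda)v$ is a real-affine map $\mathbb{R}^{2n+2}\to\mathbb{R}^{2n}$, so $\mathcal{U}$ is the preimage of the convex set $\Omega$ and is therefore convex, hence pseudoconvex, in $\mathbb{C}^{n+1}$; likewise $\Psi$ is convex, hence psh, on $\mathcal{U}$ as the composition of the convex $\varphi$ with this affine map. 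The fiber over $\lambda$ is the translate $\mathcal{U}^\lambda=\Omega-a-\operatorname{Re}(\lambda)v$, and the translation invariance of the extremal problem defining the kernel gives $K_{\mathcal{U}^\lambda,\,\Psi(\lambda,\cdot)}(0)=K_{\Omega,\varphi}\bigl(a+\operatorname{Re}(\lambda)v\bigr)$, so that $\log K_{\mathcal{U}^\lambda,\,\Psi(\lambda,\cdot)}(0)=G(\lambda)$.

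I would then apply Berndtsson's theorem to the pseudoconvex domain $\mathcal{U}$ with psh weight $\Psi$: for such data the fiberwise diagonal kernel $(\lambda,w)\mapsto\log K_{\mathcal{U}^\lambda,\,\Psi(\lambda,\cdot)}(w)$ is plurisubharmonic on $\mathcal{U}$ (or identically $-\infty$). Restricting this psh function to the holomorphic slice $\{w=0\}\cap\mathcal{U}=T_I\times\{0\}$ shows that $G$ is subharmonic on $T_I$, whence $g$ is convex on $I$; as $a$ and $v$ are arbitrary this yields the theorem. The $-\infty$ and upper-semicontinuity issues cause no trouble: the standing fact recalled in the introduction, that $\log K_{\Omega,\varphi}$ is psh, already supplies upper semicontinuity, and a usc function of $t$ is convex precisely when its $s$-independent extension is subharmonic.

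The step I expect to be decisive — and the one to state carefully — is the construction of $\mathcal{U}$. The substitution uses $\operatorname{Re}(\lambda)$ rather than $\lambda$, which is not holomorphic, and ordinarily this would destroy any chance of a pseudoconvex total space. It is exactly the convexity of $\Omega$ and $\varphi$ that rescues the argument: preimages of convex sets and compositions with convex functions stay convex under real-affine maps, and convexity is more than enough to furnish the pseudoconvexity of $\mathcal{U}$ and the plurisubharmonicity of $\Psi$ that Berndtsson's theorem demands. The remaining verifications (the translation rule for the weighted kernel, and the elementary equivalence between convexity in $t$ and subharmonicity of $G$) are routine. If a self-contained account were preferred, the same $\mathcal{U}$ could instead be fed directly into Hörmander-type $L^2$ estimates to reprove the required subharmonicity rather than quoting it.
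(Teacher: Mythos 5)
Your proposal is correct, and its backbone coincides with the paper's: both exploit the fact that convexity, unlike pseudoconvexity, survives pullback under real-affine maps, so that Berndtsson's subharmonicity theorem can be applied to an Oka-type total space built from translates of $\Omega$, with the kernel at the origin of each fiber equal to $K_{\Omega,\varphi}$ at the moving point. The genuine difference is in how subharmonicity is upgraded to convexity. The paper keeps the parameter holomorphic in its total space ($\widetilde{\Omega}=\{(z,t):z+z_t\in\Omega\}$ with $z_t$ complex-affine in $t$), so Berndtsson only yields subharmonicity in $t$; convexity is then extracted via a separate criterion (Lemma \ref{lm:convex_psh}): if $s\mapsto\varphi(s+\lambda^2\bar{s})$ is subharmonic for every $|\lambda|<1$, then $\varphi$ is convex, whose proof needs a smoothing argument and the limit $|\lambda|\to1$, and which is applied after pulling back by $(z,s)\mapsto(z,s+\lambda^2\bar{s})$ (Theorem \ref{th:Berndtsson_convex}). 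You instead bake the degenerate substitution $t=\mathrm{Re}\,\lambda$ directly into the construction: your $\mathcal{U}$ is, up to notation, the pullback of the paper's $\widetilde{\Omega}$ under $(w,\lambda)\mapsto(w,\mathrm{Re}\,\lambda)$, i.e.\ the boundary case $\lambda^2=1$ of the paper's substitution, which the paper cannot use inside Lemma \ref{lm:convex_psh} because $t_\lambda$ is then no longer invertible. This buys a shorter argument: Berndtsson's theorem directly produces a subharmonic function depending only on $\mathrm{Re}\,\lambda$, and the classical tube-domain fact (an $\mathrm{Im}$-independent usc subharmonic function is convex in the real part) replaces Lemma \ref{lm:convex_psh} entirely. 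What the paper's longer route buys is the stronger intermediate statement of Theorem \ref{th:Berndtsson_convex} -- convexity of $t\mapsto\log K_{\widetilde{\Omega}_t,\widetilde{\varphi}_t}(z)$ as a function of the \emph{complex} parameter $t$ for an arbitrary convex fibration -- which has independent interest and feeds the joint-convexity remark at the end of the paper; your method reads off convexity only along one real slice at a time, which is all the main theorem needs.

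Two points would make your sketch airtight. First, your $\mathcal{U}$ is necessarily unbounded in the $\mathrm{Im}\,\lambda$-direction, so you are invoking Berndtsson's theorem on an unbounded pseudoconvex domain; this is legitimate (the unbounded case follows from the bounded one by exhaustion together with Ramadanov's theorem), but it deserves mention, since the paper only ever applies Berndtsson to bounded domains. Second, rather than negotiating the $-\infty$ set of $G$ when identifying ``subharmonic and $\mathrm{Im}$-independent'' with ``convex,'' it is cleanest to do what the paper does at the outset: reduce to bounded $\Omega$ and finite $\varphi$ via Ramadanov, in which case $K_{\Omega,\varphi}>0$ everywhere (a finite convex function on a bounded convex domain admits an affine minorant, so constants lie in $A^2(\Omega,\varphi)$), and $G$ is finite and continuous, so the distributional-Laplacian equivalence applies without caveats.
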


Here and in what follows, we say that a function $\phi:\Omega\rightarrow[-\infty,+\infty)$ is convex if $\Omega$ is a convex domain and
\begin{equation}\label{eq:convex_define}
\phi((1-t)z_0+tz_1)\leq(1-t)\phi(z_0)+t\phi(z_1)
\end{equation}
for all $z_0,z_1\in\Omega$ and $t\in[0,1]$. Note that a convex function $\phi$ identically equals to $-\infty$ if $\phi(z_0)=-\infty$ for some $z_0\in\Omega$. Otherwise $\phi$ would be a continuous function on $\Omega$. Thus $\log{K_{\Omega,\varphi}(z)}>-\infty$ if $\Omega$ is bounded and $\varphi(z)>-\infty$. Indeed, it suffices to prove Theorem \ref{th:convex} under these additional assumptions, in view of Ramadanov's theorem.

When $\varphi=0$, we write $K_\Omega$ instead of $K_{\Omega,0}$. In the case that $n=1$ and $\Omega\subsetneq\mathbb{C}$ is a convex domain, $K_\Omega$ is closely related to the hyperbolic metric $ds_\Omega=\lambda_\Omega(z)|dz|$ (normalized with Gauss curvature $-1$), via the following identity:
\[
K_\Omega(z)=\frac{1}{4\pi}\lambda_\Omega(z)^2,\ \ \ \forall\,z\in\Omega.
\]
As a consequence, the convexity of $\log{K_\Omega(z)}$ is equivalent to that of $\log\lambda_\Omega(z)$. The latter was proved by Caffarelli-Friedman \cite{CF85} by using PDE techniques, for $u:=\log\lambda_\Omega(z)$ is the solution of the Liouville equation
\[
\Delta{u}=e^u.
\]
Indeed, the convexity of solutions for more general semilinear elliptic PDEs is established in \cite{CF85}.

A complex-analytic proof was given later by Gustafsson \cite{Gustafsson90}. Recall that a conformal mapping $f:\mathbb{D}\rightarrow\Omega$ is called a convex univalent function if $\Omega\subsetneq\mathbb{C}$ is a convex domain. A classical result (cf. Ahlfors \cite{AhlforsBook}) asserts that $f$ is convex univalent if and only if
\begin{equation}\label{eq:convex_univalent}
\mathrm{Re}\,\frac{zf''(z)}{f'(z)}\geq-1.
\end{equation}
Gustafsson showed that the convexity of $\log\lambda_\Omega(z)$ (i.e., the convexity of $\log{K_\Omega(z)}$) is equivalent to \eqref{eq:convex_univalent}, and hence is equivalent to the convexity of $\Omega$.

It is also interesting to point out that the convexity of $\log\lambda_\Omega(z)$ has been essentially known earlier. Minda-Wright \cite{MW82} proved a seemingly stronger result that $-1/\lambda_\Omega(z)$ is a convex function if $\Omega\subsetneq\mathbb{C}$ is convex. Since $-\log(-t)$ is a convex and increasing function, it follows that $\log\lambda_\Omega=-\log(-(-1/\lambda_\Omega))$ is convex on $\Omega$. Their method is a surprisingly simple application of the Schwarz-Pick lemma. Indeed, the convexity of domain $\Omega$ and functions $\log\lambda_\Omega(z)$ and $-1/\lambda_\Omega(z)$ are all equivalent (see Kim-Minda \cite{KM93} for more equivalent conditions).

For higher-dimensional cases, some crucial difficulties might arise if one wants to extend the above methods. For example, the Riemann mapping theorem fails in the case of several complex variables, and it is also not known whether $K_{\Omega,\varphi}(z)$ satisfies certain elliptic PDE. Instead, we may reduce the proof of Theorem \ref{th:convex} to a celebrated theorem of Berndtsson \cite{Berndtsson06} (see also \cite{Berndtsson09}) on subharmonicity properties of Bergman kernels (the case $n=1$ and $\varphi=0$ is due to Maitani-Yamaguchi \cite{MY04}), by using certain characterization of convexity in terms of subharmonicity (see Lemma \ref{lm:convex_psh}).

It is also known that $\log\lambda_\Omega(z)$ (and hence $\log K_\Omega(z)$) is strictly convex, i.e., strict inequality holds in \eqref{eq:convex_define} for any $z_0,z_1\in\Omega$ and $t\in[0,1]$, if $\Omega$ is a convex domain in $\mathbb{C}$ other than a half plane or an infinite strip (cf. \cite{CF85, Gustafsson90}, see also \cite{MW82}). In the higher-dimensional case, we also have the following

\begin{theorem}\label{th:strict_convex}
Let $\Omega$ be a convex domain in $\mathbb{C}^n$. Then the following properties hold:

\begin{itemize}
\item[$(1)$]
If\/ $\Omega$ is bounded and $\varphi$ is any convex function on $\Omega$ with $\varphi\not\equiv-\infty$, then $\log{K_{\Omega,\varphi}(z)}$ is strictly convex.

\item[$(2)$]
$\log{K_\Omega(z)}$ is strictly convex if and only if $\Omega$ does not contain any real line.
\end{itemize}
\end{theorem}

Note that a convex domain $\Omega\subset\mathbb{C}^n=\mathbb{R}^{2n}$ containing a real line can be written as $\Omega'\times\mathbb{R}$ after a rotation, where $\Omega'$ is a convex domain in $\mathbb{R}^{2n-1}$. In particular, $\Omega$ is a half plane or an infinite strip if $n=1$. Thus Theorem \ref{th:strict_convex}/(2) generalizes the corresponding result for $n=1$. Moreover, it would be an interesting question to consider the equivalent conditions for strict convexity in the weighted case for $\Omega\subset\mathbb{C}^n$.

In the light of Minda-Wright \cite{MW82}, it is natural to ask that whether there exists some $\alpha>0$ such that $-K_{\Omega,\varphi}(z)^{-\alpha}$ is a convex function when $\Omega$ is a convex domain in $\mathbb{C}^n$ and $\varphi$ is a convex function on $\Omega$. When $\varphi\not\equiv0$, this is not true even in the case $n=1$. The following example is pointed out by Xu Wang in NTNU.

\begin{example}
Take $\Omega=\mathbb{C}$ and $\varphi(z)=\phi(x):=x^2$, where $z=x+iy\in\mathbb{C}$. By a result of Berndtsson (cf. \cite[\S 7.1]{Berndtsson13}), we have
\[
K_{\mathbb{C},\varphi}(z)=\int_{\mathbb{R}}e^{tx-\phi(t)^2}=\pi^{1/2}e^{x^2}.
\]
But $-K_{\Omega,\varphi}(z)^{-\alpha}=-\pi^{-\alpha/2}e^{-\alpha x^2}$ is not convex for all $\alpha>0$.
\end{example}

If $\varphi=0$, the answer is positive.

\begin{corollary}\label{cor:Bergman_convex_1}
Let $\Omega\subset\mathbb{C}^n$ be a convex domain, then $-K_\Omega(z)^{-\frac{1}{2n}}$ is a convex function.
\end{corollary}

This follows from the following Brunn-Minkowski type inequality.

\begin{corollary}\label{cor:Bergman_convex_2}
Let $\Omega_0$ and $\Omega_1$ be convex domains in $\mathbb{C}^n$. If $z_0\in\Omega_0$ and $z_1\in\Omega_1$, then
\[
K_{\Omega_0+\Omega_1}(z_0+z_1)^{-\frac{1}{2n}}\geq K_{\Omega_0}(z_0)^{-\frac{1}{2n}} + K_{\Omega_1}(z_1)^{-\frac{1}{2n}}.
\]
\end{corollary}

As an application, Corollary \ref{cor:Bergman_convex_1} implies that $-K_\Omega(z)^{-\frac{1}{2n}}$ is a plurisubharmonic function when $\Omega$ is a convex domain. When $K_\Omega(z)>0$ for all $z\in\Omega$, this is equivalent to the inequality
\[
2n\cdot i\partial\bar{\partial}\log K_\Omega(z)\geq i\partial\log K_\Omega(z)\wedge\bar{\partial}\log K_\Omega(z),
\]
i.e.,
\begin{equation}\label{eq:d_bounded}
|\partial\log K_\Omega(z)|^2_{i\partial\bar{\partial}\log K_\Omega(z)}\leq 2n.
\end{equation}
Note that $K_\Omega(z)>0$ if and only if the convex domain $\Omega$ does not contain any complex line (cf. Nikolov-Pflug \cite{NP03}). As a consequence of \eqref{eq:d_bounded}, we obtain an alternative proof of the known fact that if $\Omega\subset\mathbb{C}^n$ is a convex domain which does not contain any complex line, then the Bergman metric on $\Omega$ is $d$-bounded in the sense of Gromov \cite{Gromov91}. This result can be proved by using the estimates of Nikolov-Pflug \cite{NP03}. Zimmer \cite[Proposition 4.12]{Zimmer21} further generalized to the case of $\mathbb{C}$-convex domains, by using the estimates of Nikolov-Pflug-Zwonek \cite{NPZ11}. It is also worth mentioning that for bounded convex domains, the $d$-boundedness of the Bergman metric can also be proved by using Kim-Zhang \cite[Theorem 1.1]{KZ16}. They proved that any bounded convex domain is uniformly squeezing, while Yeung \cite[Theorem 2]{Yeung09} shows that the uniformly squeezing property implies the $d$-boundedness of the Bergman metric.

We also remark that the exponent $-\frac{1}{2n}$ in Corollary \ref{cor:Bergman_convex_1} is optimal. Indeed, if $\Omega=\mathbb{D}^n$ is the unit polydisc and $z=(t,t,\cdots,t)$, where $t\in[0,1]$, then
\[
K_{\mathbb{D}^n}(t)=\frac{1}{\pi^n(1-t^2)^{2n}}.
\]
When $\alpha>1/{2n}$, $t\mapsto -K_{\mathbb{D}^n}(t)^{-\alpha}$ is not convex.

For the unit ball $\mathbb{B}^n$, it is easy to see that $-K_{\mathbb{B}^n}(z)^{-\frac{1}{n+1}}$ is a convex function. Based on this observation, Wang-Liu-Zhang \cite{WLZ24} asked that whether $-K_{f(\mathbb{B}^n)}(z)^{-\frac{1}{n+1}}$ is a convex function when $f$ is a convex univalent function on $\mathbb{B}^n$ (i.e., $f$ maps $\mathbb{B}^n$ biholomorphically to a convex domain).

\section{Convex functions and subharmonic functions}

Let us first consider a smooth function $\varphi$ of one complex variable $t$. The real Hessian of $\varphi$ is the following quadratic form
\[
\eta\mapsto\frac{\partial^2\varphi}{\partial{t}\partial\bar{t}}|\eta|^2+\mathrm{Re}\,\left(\frac{\partial^2\varphi}{\partial{t}^2}\eta^2\right),\ \ \ \eta\in\mathbb{C}.
\]
It is well-known that $\varphi$ is convex if and only if the real Hessian of $\varphi$ is semi-positive. For $\eta\neq0$, set $\lambda=\eta/|\eta|$. Thus $\varphi$ is convex if and only if
\begin{equation}\label{eq:convex_1_dim}
\frac{\partial^2\varphi}{\partial{t}\partial\bar{t}}+\mathrm{Re}\,\left(\lambda^2\frac{\partial^2\varphi}{\partial{t}^2}\right)\geq0
\end{equation}
for all complex number $\lambda$ with $|\lambda|=1$. Since we may always choose $\varphi$ with $\mathrm{Re}\,(\lambda^2\partial^2\varphi/\partial{t}^2)\leq0$, \eqref{eq:convex_1_dim} implies
\[
\frac{\partial^2\varphi}{\partial{t}\partial\bar{t}}\geq0,
\]
i.e., $\varphi$ is a subharmonic function.

Next, let $|\lambda|<1$ and set
\[
s:=\frac{t-\lambda^2\bar{t}}{1-|\lambda|^4},
\]
i.e., $t=t_\lambda(s):=s+\lambda^2\bar{s}$. Based on \eqref{eq:convex_1_dim}, we have the following criterion for convexity.

\begin{lemma}\label{lm:convex_psh}
Let $U$ be a convex domain in $\mathbb{C}$ and $V_\lambda:=\{s\in\mathbb{C}:t=t_\lambda(s)\in{U}\}$. If $\varphi$ is a function on $U$ such that $s\mapsto\varphi(t_\lambda(s))$ is subharmonic on $V_\lambda$ whenever $|\lambda|<1$, then $\varphi$ is convex on $U$.
\end{lemma}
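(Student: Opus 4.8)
The plan is to prove the statement first for $\varphi\in C^2$ by a direct second-derivative computation, and then to remove the smoothness assumption by mollification. For the smooth case I would compute the complex Laplacian of $\psi_\lambda(s):=\varphi(t_\lambda(s))$ directly. Since $t_\lambda(s)=s+\lambda^2\bar s$ is $\mathbb{R}$-linear with $\partial t/\partial s=1$ and $\partial t/\partial\bar s=\lambda^2$, the chain rule gives, after a short calculation using $\partial^2\varphi/\partial\bar t^2=\overline{\partial^2\varphi/\partial t^2}$,
\[
\frac{\partial^2}{\partial s\,\partial\bar s}\varphi(t_\lambda(s))=(1+|\lambda|^4)\frac{\partial^2\varphi}{\partial t\partial\bar t}+2\,\mathrm{Re}\left(\lambda^2\frac{\partial^2\varphi}{\partial t^2}\right),
\]
where the right-hand side is evaluated at $t=t_\lambda(s)$. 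Thus the hypothesis that $\psi_\lambda$ is subharmonic is exactly the pointwise inequality $(1+|\lambda|^4)\varphi_{t\bar t}+2\,\mathrm{Re}(\lambda^2\varphi_{tt})\geq0$ on $V_\lambda$.

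Next I would fix an arbitrary point $t_0\in U$. For each $\lambda$ with $|\lambda|<1$ the system $t_\lambda(s)=t_0$ has the unique solution $s_0=(t_0-\lambda^2\bar t_0)/(1-|\lambda|^4)\in V_\lambda$, so the displayed inequality holds at $t_0$ for every $|\lambda|<1$. With $t_0$ (hence $\varphi_{t\bar t}(t_0)$ and $\varphi_{tt}(t_0)$) fixed, the left-hand side is a continuous function of $\lambda$ on the closed unit disk, so letting $\lambda\to\mu$ with $|\mu|=1$ yields $\varphi_{t\bar t}(t_0)+\mathrm{Re}(\mu^2\varphi_{tt}(t_0))\geq0$ for all unit $\mu$. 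By \eqref{eq:convex_1_dim} this says the real Hessian of $\varphi$ is semi-positive at $t_0$, and since $t_0$ was arbitrary, $\varphi$ is convex on $U$.

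For general $\varphi$ I would regularize. Taking $\lambda=0$ in the hypothesis already shows $\varphi$ is subharmonic on $U$, so the standard mollification $\varphi_\varepsilon=\varphi*\rho_\varepsilon$ (with $\rho_\varepsilon$ a smooth nonnegative radial approximate identity supported in $\{|w|\leq\varepsilon\}$) is smooth on $U_\varepsilon=\{t:\mathrm{dist}(t,\partial U)>\varepsilon\}$ and decreases pointwise to $\varphi$ as $\varepsilon\downarrow0$. The point is that $\varphi_\varepsilon$ still satisfies the hypothesis: using the $\mathbb{R}$-linearity of $t_\lambda$ and the change of variables $w=t_\lambda(u)$, whose real Jacobian is the constant $1-|\lambda|^4$, one rewrites $s\mapsto\varphi_\varepsilon(t_\lambda(s))$ as the convolution of the subharmonic function $s\mapsto\varphi(t_\lambda(s))$ against the nonnegative kernel $(1-|\lambda|^4)\rho_\varepsilon(t_\lambda(\cdot))$, which is therefore again subharmonic. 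The smooth case then makes each $\varphi_\varepsilon$ convex on the convex set $U_\varepsilon$, and since a decreasing limit of convex functions is convex and $U_\varepsilon\uparrow U$, the function $\varphi$ is convex on $U$.

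I expect the main obstacle to be this regularization step, namely verifying that the convexity hypothesis survives mollification. The naive worry is that the averaging is performed in the $t$-variable while subharmonicity is asserted in the $s$-variable; the resolution is precisely the change of variables above, which works only because $t_\lambda$ is linear, so that convolution in $t$ transforms into convolution against a positive kernel in $s$. One must also take a little care with domains, checking that $s-u$ remains in $V_\lambda$ whenever $s\in\{s:t_\lambda(s)\in U_\varepsilon\}$ and $u$ lies in the (small) support of the transformed kernel, which follows from $|t_\lambda(u)|\leq\varepsilon$ together with $t_\lambda(s-u)=t_\lambda(s)-t_\lambda(u)$.
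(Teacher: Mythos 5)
Your proposal is correct and follows essentially the same route as the paper's proof: the identical chain-rule computation $\partial^2\varphi(t_\lambda(s))/\partial s\partial\bar s=(1+|\lambda|^4)\varphi_{t\bar t}+2\,\mathrm{Re}(\lambda^2\varphi_{tt})$ with the limit $|\lambda|\to1$ recovering the real-Hessian criterion, and the same mollification argument in which linearity of $t_\lambda$ converts convolution in $t$ into convolution against a positive kernel in $s$, so the hypothesis passes to the smoothings and convexity survives the decreasing limit. The only cosmetic difference is bookkeeping (your $U_\varepsilon$ versus the paper's exhaustion by convex subdomains $U'\subset\subset U$), which changes nothing of substance.
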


\begin{proof}
First of all, we assume that $\varphi$ is smooth. A straightforward calculation yields
\[
\frac{\partial\varphi(t_\lambda(s))}{\partial{s}}=\frac{\partial\varphi}{\partial{t}}\frac{\partial{t_\lambda(s)}}{\partial{s}}+\frac{\partial\varphi}{\partial\bar{t}}\frac{\partial\overline{t_\lambda(s)}}{\partial{s}}=\frac{\partial\varphi}{\partial{t}}+\bar{\lambda}^2\frac{\partial\varphi}{\partial\bar{t}}
\]
and
\begin{align*}
\frac{\partial^2\varphi(t_\lambda(s))}{\partial{s}\partial\bar{s}}
=&\,\frac{\partial}{\partial\bar{s}}\left(\frac{\partial\varphi}{\partial{t}}\right)+\bar{\lambda}^2\frac{\partial}{\partial\bar{s}}\left(\frac{\partial\varphi}{\partial\bar{t}}\right)\\
=&\,\frac{\partial^2\varphi}{\partial{t}^2}\frac{\partial{t_\lambda(s)}}{\partial\bar{s}}+\frac{\partial^2\varphi}{\partial{t}\partial\bar{t}}\frac{\partial\overline{t_\lambda(s)}}{\partial\bar{s}}+\bar{\lambda}^2\frac{\partial^2\varphi}{\partial{t}\partial\bar{t}}\frac{\partial{t_\lambda(s)}}{\partial\bar{s}}+\bar{\lambda}^2\frac{\partial^2\varphi}{\partial\bar{t}^2}\frac{\partial\overline{t_\lambda(s)}}{\partial\bar{s}}\\
=&\,(1+|\lambda|^4)\frac{\partial^2\varphi}{\partial{t}\partial\bar{t}}+2\mathrm{Re}\,\left(\lambda^2\frac{\partial^2\varphi}{\partial{t}^2}\right).
\end{align*}
Since $s\mapsto\varphi(t_\lambda(s))$ is subharmonic, we have
\[
(1+|\lambda|^4)\frac{\partial^2\varphi}{\partial{t}\partial\bar{t}}+2\mathrm{Re}\,\left(\lambda^2\frac{\partial^2\varphi}{\partial{t}^2}\right)\geq0,\ \ \ \forall\,\lambda:|\lambda|<1
\]
on $U$. Letting $|\lambda|\rightarrow1$, we obtain \eqref{eq:convex_1_dim} for any complex number $\lambda$ with $|\lambda|=1$.

In general, we fix an arbitrary convex subdomain $U'\subset\subset{U}$. By taking $\lambda=0$, we see that $\varphi$ is a subharmonic function on $U$. In particular, it is locally integrable. Take a cut-off function $\kappa\in{C^\infty_0(\mathbb{C})}$ with $\kappa\geq0$ and $\int_{\mathbb{C}}\kappa=1$. Then the convolution
\[
\varphi_j(t)=\int_{\tau\in\mathbb{C}}\varphi(t-j^{-1}\tau)\kappa(\tau)
\]
is a smooth function defined on $U'$ when $j\gg1$, such that $\varphi_j\downarrow\varphi$ as $j\uparrow\infty$. Note that
\[
\varphi_j(t_\lambda(s))=\int_{\tau\in\mathbb{C}}\varphi(t_\lambda(s)-j^{-1}\tau)\kappa(\tau)=(1-|\lambda|^4)\int_{\tau'\in\mathbb{C}}\varphi(t_\lambda(s-j^{-1}\tau'))\kappa\circ{t_\lambda}(\tau'),
\]
where $\tau'=t_\lambda^{-1}(\tau)$ and $1-|\lambda|^4=\det(t_\lambda)$ (the determinant of $t_\lambda$ as a real linear mapping on $\mathbb{R}^2$). It follows that $\varphi_j(t_\lambda(s))$ is subharmonic with respect to $s\in V'_\lambda:=\{s\in\mathbb{C}:t_\lambda(s)\in{U'}\}$. Thus $\varphi_j$ is convex on $U'$, and so is its decreasing limit $\varphi$. Since $U'$ is arbitrary, we complete the proof.
\end{proof}

\section{Berndtsson's theorem for convex domains and weight functions}

To prove Theorem \ref{th:convex}, we shall make use of the following modification of Berndtsson's subharmonicity theorem (cf. \cite{Berndtsson06}).

\begin{theorem}\label{th:Berndtsson_convex}
Let $\widetilde{\Omega}$ be a bounded convex domain in $\mathbb{C}^{n+1}$ and $\widetilde{\varphi}$ a convex function on $\widetilde{\Omega}$. Set
\[
\widetilde{\Omega}_t:=\left\{z\in\mathbb{C}^n:(z,t)\in\widetilde{\Omega}\right\}\ \ \ \text{and}\ \ \ \widetilde{\varphi}_t:=\widetilde{\varphi}(\cdot,t).
\]
Then $t\mapsto\log{K_{\widetilde{\Omega}_t,\widetilde{\varphi}_t}(z)}$ is a convex function on $\{t\in\mathbb{C}:(z,t)\in\widetilde{\Omega}\}$.
\end{theorem}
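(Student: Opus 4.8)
The plan is to combine Berndtsson's subharmonicity theorem \cite{Berndtsson06} with the convexity criterion of Lemma~\ref{lm:convex_psh}. Fix the point $z$ and set $g(t):=\log K_{\widetilde{\Omega}_t,\widetilde{\varphi}_t}(z)$ on the slice $U:=\{t\in\mathbb{C}:(z,t)\in\widetilde{\Omega}\}$, which is convex because it is a slice of the convex set $\widetilde{\Omega}$. (If $\widetilde{\varphi}\equiv-\infty$ on some fiber the corresponding kernel vanishes and $g\equiv-\infty$ is trivially convex, so I may assume $\widetilde{\varphi}$ is finite.) By Lemma~\ref{lm:convex_psh}, to prove that $g$ is convex it suffices to show that for every $\lambda$ with $|\lambda|<1$ the function $s\mapsto g(t_\lambda(s))$ is subharmonic on $V_\lambda=\{s\in\mathbb{C}:t_\lambda(s)\in U\}$, where $t_\lambda(s)=s+\lambda^2\bar{s}$.

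The key idea is to realize the reparametrized family $\{\widetilde{\Omega}_{t_\lambda(s)}\}_s$ as the fibers of a single domain over the $s$-variable. Consider the $\mathbb{R}$-linear map $\Phi_\lambda:\mathbb{C}^{n+1}\to\mathbb{C}^{n+1}$, $\Phi_\lambda(z,s)=(z,t_\lambda(s))$, which is invertible for $|\lambda|<1$ since $\det(t_\lambda)=1-|\lambda|^4>0$. Set
\[
D_\lambda:=\Phi_\lambda^{-1}(\widetilde{\Omega})=\{(z,s)\in\mathbb{C}^{n+1}:(z,t_\lambda(s))\in\widetilde{\Omega}\},\qquad \psi_\lambda:=\widetilde{\varphi}\circ\Phi_\lambda.
\]
Because $\Phi_\lambda$ is a real-linear isomorphism and $\widetilde{\Omega}$ is a bounded convex domain, $D_\lambda$ is again a bounded convex domain, hence pseudoconvex; likewise $\psi_\lambda$, being the composition of the convex function $\widetilde{\varphi}$ with a linear map, is convex and therefore plurisubharmonic on $D_\lambda$.

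It then remains to match the Bergman kernels. Since $\Phi_\lambda$ leaves the $z$-coordinate fixed, the fiber of $D_\lambda$ over $s$ is exactly $\{z:(z,t_\lambda(s))\in\widetilde{\Omega}\}=\widetilde{\Omega}_{t_\lambda(s)}$, and the restriction of $\psi_\lambda$ to this fiber is precisely $\widetilde{\varphi}_{t_\lambda(s)}$. Consequently the slicewise weighted Bergman kernel of $(D_\lambda,\psi_\lambda)$ evaluated at $z$ equals $K_{\widetilde{\Omega}_{t_\lambda(s)},\widetilde{\varphi}_{t_\lambda(s)}}(z)$, so the slicewise $\log$-kernel agrees with $s\mapsto g(t_\lambda(s))$. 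Applying Berndtsson's theorem to the pseudoconvex domain $D_\lambda$ with the plurisubharmonic weight $\psi_\lambda$ shows that $(z,s)\mapsto\log K_{(D_\lambda)_s,(\psi_\lambda)_s}(z)$ is plurisubharmonic, and in particular $s\mapsto g(t_\lambda(s))$ is subharmonic on $V_\lambda$. Lemma~\ref{lm:convex_psh} then yields the convexity of $g$, which is the assertion of the theorem.

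I expect the only genuinely delicate point to be conceptual rather than computational. The change of variables $\Phi_\lambda$ is \emph{not} holomorphic, so it cannot be used to transport the complex-analytic structure of the problem; what rescues the argument is twofold. First, convexity, unlike pseudoconvexity (which is defined through the complex structure and is \emph{not} stable under arbitrary real-linear maps), is preserved by every real-affine map, so both $D_\lambda$ and $\psi_\lambda$ remain in the class to which Berndtsson's theorem applies; this is precisely the reason the convex hypothesis, rather than mere pseudoconvexity, is indispensable here. Second, the weighted Bergman kernel of a fiber depends only on the intrinsic data $(\widetilde{\Omega}_t,\widetilde{\varphi}_t)$ of that fiber, so a non-holomorphic reparametrization of the fiber variable leaves it unchanged. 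A minor technical point is to verify the non-degeneracy needed for Berndtsson's theorem, which follows from the boundedness of the fibers (so that constants already give $\log K>-\infty$); if desired, one can further smooth $\widetilde{\varphi}$ by the convex-preserving convolution used in the proof of Lemma~\ref{lm:convex_psh} and pass to the limit.
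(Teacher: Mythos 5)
Your proposal is correct and follows essentially the same route as the paper: pull back $\widetilde{\Omega}$ and $\widetilde{\varphi}$ by the real-linear map $(z,s)\mapsto(z,t_\lambda(s))$, observe that convexity (hence pseudoconvexity/plurisubharmonicity) is preserved so Berndtsson's theorem applies, identify the fiberwise kernels, and conclude via Lemma~\ref{lm:convex_psh}. The paper's proof is the same argument with only notational differences.
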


\begin{proof}
In the case that $\widetilde{\Omega}$ is pseudoconvex and $\widetilde{\varphi}$ is psh, Berndtsson proved that $t\mapsto\log{K_{\widetilde{\Omega}_t,\widetilde{\varphi}_t}(z)}$ is a subharmonic function (cf. \cite{Berndtsson06}, Theorem 1.1). Take any $\lambda\in\mathbb{C}$ with $|\lambda|<1$ and let $t_\lambda$ be given as above. Define a real linear isomorphism $T$ of $\mathbb{C}^{n+1}$ by
\[
T:(z,s)\mapsto(z,t_\lambda(s))=(z,s+\lambda^2\bar{s}).
\]
Set $\widetilde{D}=T^{-1}(\widetilde{\Omega})$, $\widetilde{\psi}=\widetilde{\varphi}\circ{T}$ and
\[
\widetilde{D}_s:=\left\{z\in\mathbb{C}^n:(z,s)\in\widetilde{D}\right\}=\widetilde{\Omega}_{t_\lambda(s)},\ \ \ \widetilde{\psi}_s:=\widetilde{\psi}(\cdot,s)=\widetilde{\varphi}_{t_\lambda(s)}.
\] It follows that $\widetilde{D}$ is still a convex domain in $\mathbb{C}^{n+1}$ and $\widetilde{\psi}$ is convex with respect to $(z,s)$. In particular, $\widetilde{D}$ is pseudoconvex and $\widetilde{\psi}$ is psh with respect to $(z,s)$, so that Berndtsson's theorem applies, i.e.,
\[
s\mapsto\log{K_{\widetilde{D}_s,\widetilde{\psi}_s}(z)}=\log{K_{\widetilde{\Omega}_{t_\lambda(s)},\widetilde{\varphi}_{t_\lambda(s)}}(z)}
\]
is a subharmonic function on $\{s\in\mathbb{C}:(z,s)\in\widetilde{D}\}=\{s\in\mathbb{C}:(z,t_\lambda(s))\in\widetilde{\Omega}\}$. By Lemma \ref{lm:convex_psh}, $t\mapsto\log{K_{\widetilde{\Omega}_t,\widetilde{\varphi}_t}(z)}$ is a convex function on $\{t\in\mathbb{C}:(z,t)\in\widetilde{\Omega}\}$.
\end{proof}

\section{Proof of Theorem \ref{th:convex}}

\begin{proof}[Proof of Theorem \ref{th:convex}]
We first consider the case that $\Omega$ is a bounded convex domain and $\varphi>-\infty$. Given $z_0,z_1\in\Omega$, set
\[
z_t:=(1-t)z_0+tz_1,\ \ \ \forall\,t\in\mathbb{C}
\]
and $U:=\{t\in\mathbb{C}:z_t\in\Omega\}$. Since $U$ is the pre-image of $\Omega$ under the linear mapping $t\mapsto{z_t}$, it follows that $U$ is a convex neighbourhood of $[0,1]$. Consider the following domain
\[
\widetilde{\Omega}:=\{(z,t)\in\mathbb{C}^{n+1}:z+z_t\in\Omega,\ t\in{U}\}
\]
and the function
\[
\widetilde{\varphi}(z,t):=\varphi(z+z_t),\ \ \ (z,t)\in\widetilde{\Omega}.
\]
It is easy to verify that $\widetilde{\Omega}$ is a convex domain and $\widetilde{\varphi}$ is a convex function on $\widetilde{\Omega}$. By Theorem \ref{th:Berndtsson_convex}, we see that the function $t\mapsto\log{K_{\widetilde{\Omega}_t,\widetilde{\varphi}_t}(z)}$ is convex on $\{t\in\mathbb{C}:(z,t)\in\widetilde{\Omega}\}$. Since
\[
\widetilde{\Omega}_t=\Omega-z_t:=\{z-z_t:z\in\Omega\},
\]
we have $0\in\widetilde{\Omega}_t$ for all $t\in{U}$ and
\[
K_{\widetilde{\Omega}_t,\widetilde{\varphi}_t}(0)=K_{\Omega,\varphi}(z_t).
\]
Thus we obtain the convexity of
\[
t\mapsto\log{K_{\Omega,\varphi}(z_t)},\ \ \ t\in{U}.
\]
In particular, for $t\in[0,1]$,
\[
\log{K_{\Omega,\varphi}((1-t)z_0+tz_1)}\leq(1-t)\log{K_{\Omega,\varphi}(z_0)}+t\log{K_{\Omega,\varphi}(z_1)},
\]
so that $\log{K_{\Omega,\varphi}(z)}$ is a convex function on $\Omega$.

In general, one can take sequences $\{\Omega_j\}$ of bounded convex domains and $\{\varphi_j\}$ of convex functions with $\varphi_j>-\infty$, such that $\Omega_j\uparrow\Omega$ and $\varphi_j\downarrow\varphi$ as $j\uparrow\infty$. The previous argument yields the convexity of $z\mapsto\log K_{\Omega_j,\varphi_j}(z)$. By the Ramadanov theorem, we have
\[
K_{\Omega_j,\varphi_j}(z)\downarrow K_{\Omega,\varphi}(z),\ \ \ \forall\,z\in\Omega
\]
as $j\uparrow\infty$. Thus $z\mapsto\log K_{\Omega,\varphi}(z)$ is a convex function on $\Omega$.
\end{proof}

\begin{remark}
The idea of constructing the domain $\widetilde{\Omega}$ in the proof of Theorem \ref{th:convex} goes back to Oka. It is also used by Maitani-Yamaguchi \cite{MY04} and Berndtsson \cite{Berndtsson06} to prove that $(z,t)\mapsto\log{K_{\widetilde{\Omega}_t,\widetilde{\varphi}_t}}(z)$ is jointly psh. In the setting of Theorem \ref{th:Berndtsson_convex}, one can apply the same trick as in \cite{MY04, Berndtsson06} to prove the joint convexity of $(z,t)\mapsto\log{K_{\widetilde{\Omega}_t,\widetilde{\varphi}_t}}(z)$. This is pointed out to the author by Xu Wang.
\end{remark}

\section{Proof of Theorem \ref{th:strict_convex}}

We start with the following property for strictly convex functions.

\begin{lemma}\label{lm:strict_convex_linear}
If $\phi:\Omega\rightarrow(-\infty,+\infty)$ is a convex function but is not strictly convex, then there exists $z_0,z_1\in\Omega$, such that $\phi$ is a linear function on the interval
\[
[z_0,z_1]:=\{(1-t)z_0+tz_1:t\in[0,1]\},
\]
i.e., $\phi((1-t)z_0+tz_1)=At+B$ for some $A,B\in\mathbb{R}$.
\end{lemma}

\begin{proof}
By \eqref{eq:convex_define}, we see that the maximal principle holds on every interval, i.e.,
\begin{equation}\label{eq:convex_MP}
\phi(z)\leq\max\{\phi(z_0),\phi(z_1)\}
\end{equation}
for all $z_0,z_1\in\Omega$ and $z\in[z_0,z_1]$. Suppose that equality in \eqref{eq:convex_define} holds for some $z_0,z_1\in\Omega$ and $z_t:=(1-t)z_0+tz_1$. By subtracting a linear function from $\phi$, we may assume that $\phi(z_0)=\phi(z_1)=\phi(z_t)=0$. Let $0\leq s\leq t$ and $z_s=(1-s)z_0+sz_1$. It follows from \eqref{eq:convex_MP} that $\phi(z_s)\leq0$. On the other hand, we have
\[
z_t=\frac{1-t}{1-s}z_s+\frac{t-s}{1-s}z_1.
\]
Thus
\[
0=\phi(z_t)\leq\frac{1-t}{1-s}\phi(z_s)+\frac{t-s}{1-s}\phi(z_1)=\frac{1-t}{1-s}\phi(z_s),
\]
i.e., $\phi(z_s)\geq0$. This implies $\phi|_{[z_0,z_t]}=0$, and the same argument also yields $\phi|_{[z_t,z_1]}=0$, so that $\phi|_{[z_0,z_1]}=0$.
\end{proof}

By using an idea in \cite{MW82}, we have the following
\begin{lemma}\label{lm:strict_convex}
Let $\Omega$ be a convex domain in $\mathbb{C}^n$ and $\varphi$ a convex function on $\Omega$. If for any real line $\ell\subset\mathbb{C}^n$, we have
\begin{equation}\label{eq:exhaustive}
\lim_{\ell\ni z\rightarrow{a}}K_{\Omega,\varphi}(z)=+\infty
\end{equation}
for some $a\in\partial\Omega\cap\ell$, then $\log{K_{\Omega,\varphi}(z)}$ is strictly convex.
\end{lemma}

\begin{proof}
If $\log{K_{\Omega,\varphi}(z)}$ is not strictly convex, then there exist $z_0,z_1\in\Omega$, such that $\log{K_{\Omega,\varphi}(z)}$ is a linear function on $[z_0,z_1]$, in view of Lemma \ref{lm:strict_convex_linear}. Let $\ell:=\{(1-t)z_0+tz_1:t\in\mathbb{R}\}$. The real-analyticity of $K_{\Omega,\varphi}(z)$ implies that  $\log{K_{\Omega,\varphi}(z)}$ is also linear on $\Omega\cap\ell$. Thus $\lim_{\ell\ni z\rightarrow{a}}K_{\Omega,\varphi}(z)<+\infty$, which contradicts to \eqref{eq:exhaustive}.
\end{proof}

To verify \eqref{eq:exhaustive}, we shall make use of an estimate of $K_\Omega(z)$ due to Nikolov-Pflug \cite{NP03}. Let $\Omega$ be a convex domain in $\mathbb{C}^n$ which does not contain any real line. For any fixed $z\in\Omega$, there exists a point $z_1\in\partial\Omega$ with
\[
d_1(z):=\delta_\Omega(z)=d(z,\partial\Omega)=|z_1-z|.
\]
Let $H_1$ be the complex hyperplane through $z$ which is orthogonal to the vector $z_1-z$. Then $\Omega_1:=H_1\cap\Omega$ can be identified with a bounded convex domain in $\mathbb{C}^{n-1}$, with $a\in\Omega$. We denote by $\partial_{H_1}\Omega_1$ the boundary of $\Omega_1$ in $H_1$. Take $z_2\in\partial_{H_1}\Omega_1$ with
\[
d_2(z):=\delta_{\Omega_1}(z)=d(z,\partial_{H_1}\Omega_1)=|z_2-z|.\]
This process can be repeated to yield a sequence $H_k$ ($k=1,\cdots,n-1$) of $(n-k)$-dimensional planes through the point $z$, a sequence $\Omega_k$ ($k=1,\cdots,n-1$) of convex domains in $\mathbb{C}^{n-k}$, and a sequence of boundary points $z_1,z_2,\cdots,z_n$, with $z_k\in\partial_{H_{k-1}}\Omega_{k-1}$ and
\[
d_k(z):=\delta_{\Omega_{k-1}}(z)=d(z,\partial{H_{k-1}}\Omega_{k-1})=|z_k-z|.
\]
These sequences might not be uniquely determined by $z$. But for a fixed $z$, we may fix a choice of these sequences. Then it is proved in \cite{NP03} that
\begin{equation}\label{eq:NP}
K_\Omega(z)\geq\frac{1}{(4\pi)^nd_1(z)^2\cdots{d_n(z)^2}}.
\end{equation}
In particular, if $\Omega$ is bounded, then
\[
d_j(z)\leq\mathrm{diam}(\Omega),\ \ \ j=2,3,\cdots,n,
\]
where $\mathrm{diam}(\Omega)=\sup_{z,w\in\Omega}|z-w|$ denotes the diameter of $\Omega$. This combined with \eqref{eq:NP} gives
\begin{equation}\label{eq:NP_1}
K_\Omega(z)\geq\frac{1}{(4\pi)^n\mathrm{diam}(\Omega)^{2n-2}}\frac{1}{\delta_\Omega(z)^2}.
\end{equation}
In particular, $K_\Omega(z)$ is an exhaustive function on $\Omega$.

\begin{proof}[Proof of Theorem \ref{th:strict_convex}/(1)]
It suffices to show that $K_{\Omega,\varphi}(z)$ is also an exhaustive function on $\Omega$, in view of Lemma \ref{lm:strict_convex}. To see this, we apply Hahn-Banach theorem to see that $\varphi$ is bounded from below by a linear function (see, e.g., Theorem 2.18 in \cite{ValentineBook}). Since $\Omega$ is bounded, we see that $\varphi(z)$ is bounded from below, say $\varphi(z)\geq-C$ for all $z\in\Omega$. By using \eqref{eq:extremal}, we have $K_{\Omega,\varphi}(z)\geq{e^{-C}K_\Omega(z)}$, which proves the assertion.
\end{proof}

Next, suppose that the convex domain $\Omega$ is not necessarily bounded and does not contain a real line. Given $z\in\Omega$ and $v\in\mathbb{S}^{2n-1}$, where $\mathbb{S}^{2n-1}$ denotes the unit sphere in $\mathbb{C}^n$, define
\[
L(z,v):=\sup\{t>0:z+sv\in\Omega,\ \forall\,s\in(-t,t)\}.
\]
If $\Omega$ does not contain any real line, then $L(z,v)<+\infty$ for any $z\in\Omega$ and $v\in\mathbb{S}^{2n-1}$. Set
\[
L(z):=\sup_{v\in\mathbb{S}^{2n-1}}L(z,v),
\]
i.e., $L(z)$ is half of the length of the longest segment in $\overline{\Omega}$ whose middle point is $z$. We have
\[
d_j(z)\leq L(z),\ \ \ j=2,3,\cdots,n,
\]
so that
\begin{equation}\label{eq:NP_2}
K_\Omega(z)\geq\frac{1}{(4\pi)^nL(z)^{2n-2}}\frac{1}{\delta_\Omega(z)^2},
\end{equation}
in view of \eqref{eq:NP}. We have

\begin{lemma}\label{lm:bounded_L}
Let $\Omega$ be a convex domain in $\mathbb{C}^n$ which does not contain any real line. Then for any $a\in\partial\Omega$, we have
\[
\limsup_{\Omega\ni z\rightarrow{a}}L(z)<+\infty.
\]
\end{lemma}

\begin{proof}
Suppose the contrary that $\limsup_{\Omega\ni z\rightarrow{a}}L(z)=+\infty$. Then there exist sequences $\{z_k\}\subset\Omega$ and $\{v_k\}\subset\mathbb{S}^{2n-1}$, such that $z_k\rightarrow{a}$ and $L(z_k,v_k)\rightarrow+\infty$. By the compactness of $\mathbb{S}^{2n-1}$, we may also assume that $\{v_k\}$ converges to some $v\in\mathbb{S}^{2n-1}$. Denote $t_k:=L(z_k,v_k)/2$. Since
\[
z_k+sv_k\in\Omega,\ \ \ \forall\,s\in\left(-t_k,t_k\right),
\]
it follows that $a+sv\in\overline{\Omega}$ for all $s\in\mathbb{R}$ by letting $k\rightarrow+\infty$. That is, $\overline{\Omega}$ contains a real line.

Take any $z\in\Omega$. By Theorem 1.11 in \cite{ValentineBook}, we have $(z+w)/2\in\Omega$ for any $w\in\partial\Omega$. Thus
\[
\frac{z+(a+sv)}{2}=\frac{z+a}{2}+\frac{s}{2}v\in\Omega,\ \ \ \forall\,s\in\mathbb{R},
\]
so that $\Omega$ also contains a real line, which is a contradiction.
\end{proof}

\begin{proof}[Proof of Theorem \ref{th:strict_convex}/(2)]
If $\Omega$ is a convex domain in $\mathbb{C}^n$ which does not contain a real line, then \eqref{eq:exhaustive} follows immediately from \eqref{eq:NP_2} and Lemma \ref{lm:bounded_L}. We see that $\log{K_\Omega(z)}$ is strictly convex by Lemma \ref{lm:strict_convex}.

On the other hand, if the convex domain $\Omega$ contains a real line, say $\ell:=\{z_0+tv:t\in\mathbb{R}\}$ for some $v\in\mathbb{S}^{2n-1}$, then we infer from the convexity of $\Omega$ that $z+tv\in\Omega$ for any $z\in\Omega$ and $t\in\mathbb{R}$. Thus for any fixed $t\in\mathbb{R}$, $z\mapsto{z+tv}$ is a biholomorphic automorphism of $\Omega$, so that $K_\Omega(z)=K_\Omega(z+tv)$. It follows that $\log{K_\Omega}(z)$ is a constant on the real line $\ell$, and hence it is not strictly convex.
\end{proof}

\section{Proof of Corollary \ref{cor:Bergman_convex_1} and \ref{cor:Bergman_convex_2}}
\begin{proof}[Proof of Corollary \ref{cor:Bergman_convex_2}]
As in the proof of Theorem \ref{th:convex}, it suffices to consider the case that $\Omega$ is bounded. We shall make use of a standard scaling trick. The Bergman kernel satisfies
\[
K_{c\Omega}(0)=c^{-2n}K_\Omega(0)
\]
whenever $0\in\Omega$ and $c>0$. After translations, we may assume that $z_0=z_1=0$. Set
\[
\widetilde{\Omega}:=\{(z,t);\ 0<\mathrm{Re}\,t<1,\ |\mathrm{Im}\,t|<1,\ z\in(1-\mathrm{Re}\,t)\Omega_0+(\mathrm{Re}\,t)\,\Omega_1\}.
\]
Then $\widetilde{\Omega}$ is a bounded convex domain in $\mathbb{C}^{n+1}$. By Theorem \ref{th:Berndtsson_convex}, for any $t\in(0,1)$, we have
\[
K_{\Omega_t}(0)=K_{(1-t)\Omega_0+t\Omega_1}\leq\max\{K_{\Omega_0}(0),K_{\Omega_1}(0)\}.
\]
Denote $c_0:=K_{\Omega_0}(0)^{-\frac{1}{2n}}$ and $c_1:=K_{\Omega_1}(0)^{-\frac{1}{2n}}$. If we replace $\Omega_0$ by $c_0^{-1}\Omega_0$ and $\Omega_1$ by $c_1^{-1}\Omega_1$, then
\[
K_{(1-t)c_0^{-1}\Omega_0+tc_1^{-1}\Omega_1}(0)\leq \max\{K_{c_0^{-1}\Omega_0}(0),K_{c_1^{-1}\Omega_1}(0)\}=1.
\]
Take $t=c_1/(c_0+c_1)$. It follows that
\[
1\geq K_{\frac{\Omega_0+\Omega_1}{c_0+c_1}}(0)=(c_0+c_1)^{2n}K_{\Omega_0+\Omega_1}(0),
\]
i.e., $K_{\Omega_0+\Omega_1}(0)^{-\frac{1}{2n}}\geq K_{\Omega_0}(0)^{-\frac{1}{2n}} + K_{\Omega_1}(0)^{-\frac{1}{2n}}$.
\end{proof}

\begin{proof}[Proof of Corollary \ref{cor:Bergman_convex_1}]
Fix $z_0,z_1\in\Omega$, we have
\[
K_{\Omega+\Omega}(z_0+z_1)^{-\frac{1}{2n}}\geq K_\Omega(z_0)^{-\frac{1}{2n}} + K_\Omega(z_1)^{-\frac{1}{2n}}.
\]
Since $\Omega$ is a convex domain, it is easy to verify that $\Omega+\Omega=2\Omega$, so that
\[
K_\Omega\left(\frac{z_0+z_1}{2}\right)^{-\frac{1}{2n}}=\left(2^{2n}K_{\Omega+\Omega}(z_0+z_1)\right)^{-\frac{1}{2n}}\geq \frac{1}{2}K_\Omega(z_0)^{-\frac{1}{2n}}+\frac{1}{2}K_\Omega(z_1)^{-\frac{1}{2n}}.\qedhere
\]
\end{proof}

\subsection*{Acknowledgement}
The author would like to thank Bo-Yong Chen for introducing this topic to him together with constant encouragement. He is also grateful to Xu Wang and Jianfei Wang for valuable comments and useful remarks.

\end{document}